\newtheorem{thm}{Theorem}[section]
\newtheorem{lem}[thm]{Lemma}
\newtheorem{prop}[thm]{Proposition}
\newtheorem{cor}[thm]{Corollary}
\theoremstyle{definition}
\newtheorem{defn}[thm]{Definition}
\newtheorem{exa}[thm]{Example}
\theoremstyle{remark}
\newtheorem{rem}[thm]{Remark}
\numberwithin{equation}{section}
\title{}
\author{}
\date{}
\begin{document}
\title{LIE-YAMAGUTI ALGEBRA BUNDLE}

\author{Saikat Goswami}
\email{saikatgoswami.math@gmail.com}
\address{RKMVERI, Belur, Howrah-711202, India.}
\address{TCG Centres for Research and Education in Science and Technology, Institute for Advancing Intelligence, Sector V, Salt lake, Kolkata 700091, INDIA.}

\author{Goutam Mukherjee}
\email{goutam.mukherjee@tcgcrest.org; gmukherjee.isi@gmail.com}
\address{TCG Centres for Research and Education
in Science and Technology, Institute for Advancing Intelligence, Sector V, Salt lake, Kolkata 700091, INDIA.}

\subjclass[2020]{53B05, 58A05, 16E99, 17A30, 17A40}
\keywords{Vector bundle, Lie-Yamaguti algebra, Non-associative algebra, Cohomology.}

\thispagestyle{empty}

\begin{abstract}
	We introduce the notion of Lie-Yamaguti algebra bundle, and show that such bundles appeared naturally from geometric considerations in the work of M. Kikkawa. This motivates us to introduce this object in the proper mathematical framework.  We define  cohomology groups of such bundles with coefficients in a representation extending the definition of cohomology groups of Lie-Yamaguti algebras.
	
\end{abstract}

\vspace{-2cm}
\maketitle

\vspace{-0.7cm}
\section{Introduction}\label{$1$}
Vector bundles play a crucial role in differential geometry and in various applications of geometry in theoretical physics. In addition, if the fibres of vector bundles have some special type of algebraic structure (generally called as algebra bundles), reflecting some special geometric features of the underlying base manifolds then, the study of such algebra bundles turn out to be very useful in characterizing many relevant properties which arise in geometry and physics. The aim of this article is to introduce and study one such algebra bundle having Lie-Yamaguti algebra structure on its fibres, that generalizes the notion of a Lie algebra bundle. Such bundles appeared implicitly in the work of M. Kikkawa to characterize some local geometric properties to investigate a question  originally posed by K. Nomizu, which motivates us to introduce this object in the proper mathematical framework. 

\medskip
Recall that the notion of Lie algebra bundle was introduced by A. Douady and M. Lazard \cite{DL}.  We refer \cite{CP} for examples of Lie algebra bundles. Various aspects of Lie algebra bundles were extensively studied in \cite{Burdujan, Kiranagi, KMA, KPC}). For some recent research on Lie and other algebra bundles we refer \cite{Kumar21, Kumar22, Kumar23, PRK}.  

\medskip
Lie-Yamaguti algebras are generalizations of Lie triple systems and Lie algebras. Triple systems in algebra may be traced back to the works of P. Jordan, J. v. Neumann and E. Wigner \cite{JNW} in quantum mechanics, and N. Kemmer \cite{NK39, NK43} in particle physics. The notion of  Lie triple system was formally introduced as an algebraic object by N. Jacobson \cite{NJ} in connection with problems which arose from quantum mechanics. 

\medskip
K. Nomizu \cite{KN} proved that affine connections with parallel torsion and curvature are locally equivalent to invariant connections on reductive homogeneous spaces, and that each such space has a canonical connection for which parallel translation along geodesics agrees with the natural action of the group. 

\medskip
Let $M$ be a smooth manifold equipped with a linear connection $\nabla.$ Let $e\in M$ be a given fixed point. Then there is a local multiplication $\mu$ at $e$ compatible with $\nabla$, which is given by
$$\mu (x, y) = exp_x \circ \tau_{e,x} \circ exp_e^{-1}(y),$$ where $exp_x$ denotes the exponential mapping at $x$ and $\tau_{e,x}$ denotes the parallel displacement of tangent vectors along the geodesic joining $e$ to $x$ in a normal
neighbourhood of $e$  \cite{MK75}. 

\medskip
If $M = A/K$ is a reductive homogeneous space with the canonical connection, due to K. Nomizu, then the local multiplication $\mu$ given above satisfies some special property (cf. \cite{KN}). In particular, if M is a Lie group $A$
itself, then the canonical connection is reduced to the connection of \cite{CS} and the local multiplication $\mu$ coincides with the multiplication of $A$ in local. 

\medskip
Motivated by this fact, M. Kikkawa \cite{MK75} investigated the problem of the existence of a global differentiable binary system on a reductive homogeneous space $A/K,$ which coincides locally with the above geodesic local multiplication $\mu$ and observed that the problem is related to the canonical connection and to the general Lie triple system defined on the tangent space $T_eM.$ In his paper, Kikkawa renamed the notion of general Lie triple system as {\it Lie triple algebra}. Kinyon and Weinstein \cite{KW} observed that Lie triple algebras, which they called Lie-Yamaguti algebras in their paper, can be constructed from Leibniz algebras. Leibniz algebras are non anti-symmetric analogue of Lie algebras introduced by J. L. Loday \cite{Loday93}.

\medskip
In the present article, we introduce the notion of a Lie-Yamaguti algebra bundle, and discuss examples of such bundles. We show that such bundles appeared implicitly in the work of M. Kikkawa \cite{MK99}. We define cohomology groups of a Lie-Yamaguti algebra bundle with coefficients in a given representation. Our theory extends the cohomology theory of  Lie-Yamaguti algebras introduced by K. Yamaguti in \cite{KY-cohomology}.  


\medskip\noindent
{\bf Organization of the paper}: In \S \ref{$2$}, we set up notations, recall some known definitions and results. In  \S \ref{$3$}, we introduce the main object of study of the present paper, namely, the notion of a {\it Lie-Yamaguti algebra bundle}, illustrate examples of such bundles and describe a general method of constructing such bundles. In  \S \ref{$4$}, we introduce the concept of representation of Lie-Yamaguti algebra bundles which is required to introduce cohomology of Lie-Yamaguti algebra bundles. Finally in  \S \ref{$5$}, we define a cochain complex which defines cohomology of a Lie-Yamaguti algebra bundle with coefficients in a given representation. 

\section{Preliminaries}\label{$2$}

The aim of this section is to recall some basic definitions and set up notations to be followed throughout the paper. Let $\mathbb K$ be a given field.
\begin{defn}
	A Lie algebra is a vector space $\mathfrak g$ over $\mathbb K$ equipped with a $\mathbb K$-bilinear operation $[~,~] : \mathfrak g\times \mathfrak g \rightarrow \mathfrak g$ satisfying
	\begin{enumerate}
		\item (Anti-symmetry): $[x, y] = -[y, x]$ for all $x, y \in \mathfrak g$;
		\item (Jacobi identity): $[[x,y],z] + [[y,z],x] + [[z,x],y] =0$  for all $x, y, z \in \mathfrak g.$ 
	\end{enumerate}
\end{defn}

\begin{defn}
	A Leibniz algebra is a vector space $\mathfrak g$ over $\mathbb K$ equipped with a $\mathbb K$-bilinear operation $\cdot : \mathfrak g \times \mathfrak g \rightarrow \mathfrak g$ satisfying the Leibniz identity
	$$ x\cdot (y\cdot z) = (x\cdot y)\cdot z + y\cdot (x\cdot z)$$
	for all $x, y, z \in \mathfrak g.$
\end{defn}
It is easy to see that in presence of the anti-symmetric condition the Leibniz identity reduces to Jacobi identity. Thus, Lie algebras are examples of Leibniz algebras. See \cite{Loday93} for many other non-trivial examples of Leibniz algebras.

\begin{defn}
	A Lie triple system is a vector space $\mathfrak g$ over $\mathbb K$ equipped with a $\mathbb K$-trilinear operation 
	$$\{~,~,~\} : \mathfrak g\times \mathfrak g\times \mathfrak g \rightarrow \mathfrak g$$ satisfying
	\begin{equation*}
		\{ x, y, z\} = -\{ y, x, z\}
	\end{equation*}
	\begin{equation*}
		\{ x,y,z\} + \{ y,z,x\} + \{ z,x,y\} = 0
	\end{equation*}
	\begin{equation*}
		\{ x, y, \{ u, v, w\} \} 
		= \{ \{ x, y, u \} , v, w\} 
		+ \{ u, \{ x, y, v\} , w\} + \{ u, v, \{ x, y, w\} \} 
	\end{equation*}
	for all $x, y, u, v, w \in \mathfrak g.$
\end{defn}

The following is an interesting example of a Lie triple system which arose from Physics \cite{NJ}.

\begin{exa}\label{Meson}
	We denote by $M_{n+1}(\mathbb R)$, the set of all $(n+1) \times (n+1)$ matrices over the field $\mathbb R$, which is an associative algebra with respect to matrix multiplication.  Let $\delta_{ij}$ denote the Kronecker delta symbol
	\[ \delta_{ij}= \left\{\begin{array}{ll}
		0 & i \neq j \\
		1 & i=j 
	\end{array}
	\right.\]
	and $e_{i,j}$ denote the elementary matrix which has $1$ in the $(i,j)^{\mbox{th}}$-entry as its only non-zero entry.
	Let $\mathfrak m$ be the subspace of $M_{n+1}(\mathbb R)$ spanned by the matrices $G_i$ for $i=1,2,\cdots,n$, where $G_i = e_{i,n+1}-e_{n+1, i}.$ 
	\vspace{0.1in}
	As an example, for $n=3,$ the matrix $G_2 \in M_4(\mathbb R)$ is given by
	\[
	G_2 =
	\begin{pmatrix}
		0 & 0 & 0 & 0 \\
		0 & 0 & 0 & 1 \\
		0 & 0 & 0 & 0 \\
		0 & -1 & 0 & 0 \\
	\end{pmatrix}.
	\]
	\noindent
	Then, the subspace $\mathfrak m$ is closed under the ternary product 
	$$\{A, B, C\}:=[[A,B],C], ~~A, B, C \in \mathfrak g$$ where $[A,B] := AB - BA$ is the commutator bracket. Explicitly, the trilinear product of the basis elements are given by
	\[
	[[G_i,G_j],G_k] = \delta_{ki} G_j - \delta_{kj} G_i.
	\] 
	It turns out that $(\mathfrak m, \{~, ~,~\})$ is a Lie triple system, first used in \cite{Duffin} to provide a significant and elegant algebraic formalism of Meson equations and hence was known as Meson field. Later, it was introduced formally as a Lie triple system by N. Jacobson in \cite{NJ}.
\end{exa}
\begin{rem}
	Note that any Lie algebra $(\mathfrak g, [~,~])$ can be viewed as a Lie triple system with the trilinear operation
	$$\{ x, y, z\} := [[x, y], z]$$  for all $x, y, z \in \mathfrak g.$
\end{rem}

\begin{defn}
	A Lie-Yamaguti Algebra $(\mathfrak g, [~,~], \{~, ~, ~\})$ is a vector space $\mathfrak g$ equipped with a $\mathbb K$-bilinear and a trilinear operation 
	$$[~, ~]: \mathfrak g \times \mathfrak g \rightarrow \mathfrak g \quad \text{and} \quad 
	\{~, ~, ~\} : \mathfrak g\times \mathfrak g \times \mathfrak g \rightarrow \mathfrak g$$ 
	such that for all $x, y, z, u, v, w \in \mathfrak g$ the following relations hold:
	\begin{equation}
		[x, y] = -[y,x]; \tag{LY1} \label{LY1} 
	\end{equation}
	\begin{equation}
		\{x, y, z\} = -\{y,x, z\}; \tag{LY2} \label{LY2}
	\end{equation}
	\begin{equation}
		\Sigma_{\circlearrowleft (x, y, z)}([[x,y], z] + \{x, y, z\}) =0;  \tag{LY3} \label{LY3}
	\end{equation}
	\begin{equation}
		\Sigma_{\circlearrowleft (x, y, z)}\{[x, y], z, u\} = 0;  \tag{LY4} \label{LY4}
	\end{equation}
	\begin{equation}
		\{x, y, [u, v]\} = [\{x, y, u\}, v] + [u, \{x, y, v\}];  \tag{LY5} \label{LY5}
	\end{equation}
	\begin{equation}
		\{x, y, \{u, v, w\}\}
		= \{\{x, y, u\}, v, w\} + \{u, \{x, y, v\}, w\} + \{u, v, \{x, y, w\}\}.  \tag{LY6} \label{LY6} 
	\end{equation}
	Here, $\Sigma_{\circlearrowleft (x,y,z)}$ denotes the sum over cyclic permutations of x, y, and z.
\end{defn}

\begin{rem}
	Notice that if the trilinear product in a Lie-Yamaguti algebra is trivial, that is, if $\{~,~,~\} = 0,$ then (LY$2$), (LY$4$), (LY$5$), and (LY$6$) are trivial, and (LY$1$) and
	(LY$3$) define a Lie algebra structure on $\mathfrak g$. On the other hand, if the binary product is trivial, that is, $[~,~] =0,$ then (LY$1$), (LY$4$), and (LY$5$) are trivial, and (LY$2$), (LY$3$), together with (LY$6$) define a Lie triple system on $\mathfrak g.$
\end{rem}

Here are some well-known examples. 

\begin{exa}\label{Lie to LYA}
	Let $(\mathfrak g, [~, ~])$ be a Lie algebra over $\mathbb K$. Then, $\mathfrak g$ has a Lie-Yamaguti algebra structure induced by the given Lie bracket, the trilinear operation being:
	$$\{a, b, c\} = [[a, b], c]$$ for all $a, b, c \in \mathfrak g$.
\end{exa}

\begin{exa}
	Let $(\mathfrak g, \cdot)$ be a Leibniz algebra. Define a bilinear operation and a trilinear operation as follows:
	$$[~, ~]: \mathfrak g \times \mathfrak g \to \mathfrak g, ~~ [a, b] := a\cdot b-b\cdot a,~~ a, b \in \mathfrak g;$$
	$$\{~, ~, ~\}:  \mathfrak g \times \mathfrak g \times \mathfrak g \to \mathfrak g, ~~\{a, b, c\} := -(a\cdot b)\cdot c,~~a, b, c \in \mathfrak g.$$ Then, $(\mathfrak g, [~,~],\{~,~,~\})$ is a Lie-Yamaguti algebra. 
\end{exa}
Let $(\mathfrak g, \langle~,~\rangle)$ be a Lie algebra. Recall that a reductive decomposition of
$\mathfrak g$ is a vector space direct sum $\mathfrak g = \mathfrak h \oplus\mathfrak m$ satisfying $\langle\mathfrak h, \mathfrak h\rangle \subseteq\mathfrak h$ and $\langle\mathfrak h, \mathfrak m\rangle \subseteq \mathfrak m.$ In this case, we call $(\mathfrak h, \mathfrak m)$ a {\it reductive pair}.

\begin{exa}\label{reductive}
	Let $(\mathfrak g, \langle~, ~\rangle)$ be a Lie algebra with a reductive decomposition $\mathfrak g = \mathfrak h \oplus\mathfrak  m.$ Then, there exist a natural binary and a ternary product on $\mathfrak m$ defined by
	$$ [a, b] := \pi_{\mathfrak m}(\langle a, b\rangle),~~\{a, b, c\} := \langle \pi_{\mathfrak h}(\langle a, b\rangle), c \rangle,$$ where $\pi_{\mathfrak m}$ and $\pi_{\mathfrak h}$ are the projections on $\mathfrak m$ and $\mathfrak h$, respectively. These products endow $\mathfrak m$ with the structure of a Lie-Yamaguti algebra \cite{BBM}.
\end{exa}

\begin{exa}
	Consider the vector space $\mathfrak g$ over $\mathbb K$ generated by $\{e_1, e_2, e_3\}.$ Define a bilinear operation $[~,~]$ and a trilinear operation $\{~, ~, ~\}$ on $\mathfrak g$ as follows.
	$$ [e_1, e_2] = e_3;~~\{e_1, e_2, e_1\} = e_3.$$
	All other brackets of the basis elements are either determined by the definition of Lie-Yamaguti algebra or else are zero. Then, $\mathfrak g$ with the above operations is a Lie-Yamaguti algebra.
\end{exa}

See \cite{ABCO} for classification of some low dimensional Lie-Yamaguti algebras.

\begin{defn}
	Let $(\mathfrak g, [~,~], \{~,~,~\})$,  $(\mathfrak g^\prime, [~,~]^\prime, \{~,~,~\}^\prime)$ be two Lie-Yamaguti algebras. A homomorphism
	$$ \phi :  (\mathfrak g, [~,~], \{~,~,~\}) \rightarrow (\mathfrak g^\prime, [~,~]^\prime, \{~,~,~\}^\prime)$$ of Lie-Yamaguti algebras is a $\mathbb K$-linear map $\phi :  \mathfrak g \rightarrow \mathfrak g^\prime$ satisfying
	$$\phi ([x, y]) = [\phi (x), \phi (y)]^\prime,~~~~\phi (\{x, y, z\}) = \{\phi (x), \phi (y), \phi (z)\}^\prime$$ for all $x, y, z \in \mathfrak g.$
	
	A homomorphism
	$$ \phi :  (\mathfrak g, [~,~], \{~,~,~\}) \rightarrow (\mathfrak g^\prime, [~,~]^\prime, \{~,~,~\}^\prime)$$ of Lie-Yamaguti algebras is an isomorphism if there exists a homomorphism
	$$\phi^\prime:  (\mathfrak g^\prime, [~,~]^\prime, \{~,~,~\}^\prime) \rightarrow (\mathfrak g, [~,~], \{~,~,~\})$$ such that $\phi^\prime \circ \phi = id_{\mathfrak g}$ and $\phi \circ \phi^\prime = id_{\mathfrak g^\prime}.$  The set of all self-isomorphisms of a Lie-Yamaguti algebra  $(\mathfrak g, [~,~], \{~,~,~\})$ is obviously a group under composition of maps and is denoted by ${\mbox{Aut}}_{\mbox{LYA}}(\mathfrak g).$
\end{defn}

The notion of Lie algebra bundle was introduced in \cite{DL}. For smooth Lie algebra bundle we refer \cite{mackenzie}. Other notions of algebra bundles are available in the literature and appeared in various context. 

\medskip
Let $M$ be a smooth manifold (Hausdorff and second countable, hence, paracompact). Let $ C^\infty(M)$ be the algebra of smooth functions on $M$. Let $TM$ be the tangent bundle of $M$. Recall that a vector field on $M$ is a smooth section of the tangent bundle $TM.$ Let us denote the space of  vector fields on $M$ by $\chi (M).$ It is well-known that $\chi (M)$ is a  $C^\infty (M)$-module. Moreover, $\chi (M)$ is a Lie algebra with the commutator bracket: 
$$[\alpha, \beta] := \alpha \beta -\beta \alpha$$ for $\alpha, \beta \in \chi (M).$ Here, for $\alpha, \beta \in \chi (M)$ and $p \in M,$ the action of $ \alpha \beta (p)$ on a smooth function $f \in C^\infty (M)$ is given by 
$$ \alpha \beta (p)(f) = \alpha_p(\beta f),$$ where $\beta f \in C^\infty(M)$ is given by $\beta f (m) = \beta_m (f), ~~m \in M.$

\medskip
For a (smooth) vector bundle $p : L \rightarrow M,$ often denoted by $\xi = (L, p, M),$ we denote the space of smooth sections of $L$ by $\Gamma L.$  It is well-known that $\Gamma L$ is a  $C^\infty (M)$-module. For any $m\in M,$ we denote the fibre of the vector bundle $\xi$ over $m$ by $L_m$ or sometimes by $\xi_m.$

\medskip
Henceforth, we will work in the smooth category and with $\mathbb K = \mathbb R.$
\begin{defn}
	Let $(L, p, M)$ be a vector bundle and let $[~, ~]$ be a smooth section of the bundle $\mbox{Alt}^2(L)$ such that for each $m \in M,$
	$$[~,~]_m : L_m \times L_m   \rightarrow L_m $$ is a Lie algebra bracket on $L_m.$  We call such a section a field of Lie algebra brackets in $L.$ 
\end{defn}

\begin{defn}
	A Lie algebra bundle (cf. \cite{mackenzie}) is a vector bundle $(L, p, M)$ together with a field of Lie algebra brackets
	$$m \mapsto [~, ~]_m,~m \in M.$$ 
\end{defn}
Thus, for a Lie algebra bundle $(L, p, M),$ each fibre $L_m$ is a Lie algebra which varies smoothly as $m\in M$ varies over $M.$ In other words, the assignment $m\mapsto [~,~]_m,~~m \in M$ is smooth.
\begin{defn}
	Let $\mathfrak g$ be a given Lie algebra. A locally trivial Lie algebra bundle with fibre $\mathfrak  g$ is a vector bundle $(L, p, M)$ together with a field of Lie algebra brackets
	$$m\mapsto [~, ~]_m,~~m \in M$$ 
	such that $M$ admits an open covering $\{U_i\}$ equipped with local trivializations $\{\psi_i: U_i\times \mathfrak g \rightarrow p^{(-1)}(U_i)\}$ for which each $\psi_{i,m},~~m \in M$ ($\psi_i$ restricted to each fibre $L_m$) is a Lie algebra isomorphism.
	
	\medskip
	A homomorphism $\phi: (L, p, M) \rightarrow (L^\prime, p^\prime, M^\prime)$ of Lie algebra bundles is a vector bundle morphism $(\phi, \phi_0),$ where $\phi_0: M\rightarrow M^\prime$ such that $ \phi|_{L_m}: L_m  \rightarrow L^\prime_{\phi_0(m)},~m \in M$ is a Lie algebra homomorphism. 
\end{defn}

\section{Lie-Yamaguti Algebra Bundle}\label{$3$}

In this section, we introduce the notion of Lie-Yamuguti algebra bundle and related results. All vector bundles and vector bundle maps are assumed to be smooth and $\mathbb K = \mathbb R.$

\begin{defn}
	Let $\xi = (L, p, M)$ be a (real) vector bundle. Let $\mbox{Hom}(\xi^{\otimes k}, \xi)$ be the real vector space of vector bundle maps from $\xi^{\otimes k}$ to the vector bundle $\xi,$ $k \geq 1.$ Observe that $\mbox{Hom}(\xi^{\otimes k}, \xi)$ is a vector bundle over $M.$  Let $\langle~, \cdots , ~\rangle$ be a section of the bundle $\mbox{Hom}(\xi^{\otimes k}, \xi).$  We call such a section a $k$-field of ($\mathbb K$-multinear) brackets in $\xi.$ Thus, a $k$-field of brackets in $\xi$ is a smooth assignment 
	$$m\mapsto (\langle~,\cdots, ~\rangle_m : \xi_m \times \cdots \times \xi_m  \rightarrow \xi_m) $$ of multilinear operation on $\xi_m$, $m \in M.$  
\end{defn}

\begin{defn}
	A Lie-Yamaguti algebra bundle is a vector bundle $\xi=(L, p, M)$ together with a $2$-field and a $3$-field of brackets
	$$m\mapsto [~, ~]_m 
	\quad \text{and} \quad 
	m \mapsto \{~,~, ~\}_m,~~m \in M$$
	which make each fibre $\xi_m,$ $m\in M$ a Lie-Yamaguti algebra. 
\end{defn}

\begin{defn}
	Let  $(\mathfrak g, [~,~]_{\mathfrak g}, \{~, ~, ~\}_{\mathfrak g})$ be a given Lie-Yamaguti algebra. A locally trivial Lie-Yamaguti algebra bundle is a vector bundle $\xi=(L, p, M)$ together with a $2$-field and a $3$-field of brackets
	$$m\mapsto [~, ~]_m 
	\quad \text{and} \quad 
	m \mapsto \{~,~, ~\}_m,~~m \in M$$
	such that $M$ admits an open covering $\{U_i\}$ equipped with local trivializations $\{\psi_i: U_i\times \mathfrak g \rightarrow p^{(-1)}(U_i)\}$ for which each $\psi_{i,m},~~m \in M$ ($\psi_i$ restricted to each fibre $\xi_m$) is a Lie-Yamaguti  algebra isomorphism.
\end{defn}

\begin{rem}
	Thus, for a Lie-Yamaguti algebra bundle as defined above each fibre $\xi_m= p^{-1}(m),~~m\in M,$ together with the binary operation $[~,~]_m$ and the ternary operation $\{~, ~,~\}_m$ is a Lie-Yamaguti algebra isomorphic to $\mathfrak g,$ and the assignments
	$$m \mapsto [~,~]_m,~~m\mapsto \{~, ~, ~\}_m$$ varies smoothly over $M.$ 
\end{rem}

In other words, a locally trivial Lie-Yamaguti algebra bundle over $M$ is a vector bundle over $M$ such that each fibre of the bundle has a Lie-Yamaguti algebra structure isomorphic to $\mathfrak g.$

\medskip
An obvious example of a Lie-Yamaguti algebra bundle is the trivial bundle over a smooth manifold $M$ with fibres a Lie-Yamaguti algebra.

\begin{exa}
	Let  $(\mathfrak g, [~,~], \{~, ~, ~\})$ be a given Lie-Yamaguti algebra and $M$ be any smooth manifold. Then the trivial vector bundle $\xi = M\times \mathfrak g$ with the projection onto the first factor $ \pi_1: M\times \mathfrak g \to M$ is a Lie-Yamaguti algebra bundle, called the product Lie-Yamaguti algebra bundle.
\end{exa}

We have the following  example from the Example \ref{Lie to LYA}.

\begin{exa}\label{Dorfman}
	Any Lie algebra bundle $(L, p, M, [~,~])$ is a Lie-Yamaguti algebra bundle, where the $3$-field of brackets on $M$ induced by the $2$-field of Lie brackets
	$m \mapsto [~,~]_m, ~~m\in M$ is defined by 
	$$\{a, b, c\}_m := [[a, b]_m, c]_m,~~m\in M,$$ for $a, b, c \in L_m, ~m\in M.$
\end{exa}

\begin{defn}
	Let $\xi = (L, p, M)$ be a Lie algebra bundle with the field of Lie algebra bracket $m \mapsto [~,~]_m,~~m\in M.$ A reductive decomposition of $\xi$ is a pair $(L^1, L^2)$ of sub-bundles of $L$ such that $L$ is a Whitney sum $L = L^1\oplus L^2$ satisfying $[ L^1_m, L^1_m]_m \subseteq L^1_m$ and $[L^1_m, L^2_m]_m \subseteq L^2_m.$ In this case, we call $(L^1, L^2)$ a reductive pair. 
\end{defn}

For a reductive pair as above, let $\pi^i: L\to L^i,~~i= 1, 2$ denote the vector bundle projection maps.

\begin{exa}
	Let $(L^1, L^2)$ be a reductive decomposition of a Lie algebra bundle $\xi = (L, p, M)$ as described in the above definition.  Then, define a $2$-field of brackets and a $3$-field of brackets
	$$m\mapsto \langle~,~\rangle_m,~~ m\mapsto \{~,~,~\}_m,~m \in M$$ on the vector bundle $(L^2, p|_{L^2}, M)$ as follows. Let $a, b, c \in L^2_m,~~m \in M.$ 
	$$ [a, b]_m := \pi^1(\langle a, b \rangle_m),~~\{a, b, c\} := \langle \pi^2(\langle a, b\rangle_m), c \rangle_m.$$ Then, as in the case of Example \ref{reductive}, the vector bundle $(L^2, p|_{L^2}, M)$ is a Lie-Yamaguti algebra bundle equipped with the $2$-field of brackets and the $3$-field of brackets as defined above.
\end{exa}

\begin{exa}\label{derivation-bundle}
	Let $\xi=(E, p, M)$ be a vector bundle with fibre $V.$ Consider the vector bundle $\mbox{End}(\xi) : = \mbox{Hom}(\xi,\xi)$ with fibres 
	$$\mbox{End}(\xi)_m = \mbox{End}(E_m)\cong \mbox{End}(V)= \mbox{Hom}(V, V),~~ m \in M.$$ Note that from the Example \ref{Lie to LYA}, $\mbox{End}(V)$ is a Lie-Yamaguti algebra as it is a Lie algebra with respect to the commutator bracket. The local triviality for $\mbox{End}(\xi)$ are induced from the local triviality of $\xi$ in the following way: 
	
	\medskip
	\noindent
	Any chart $\psi: U \times V \to E_U$ for $E$ induces a chart $\overline{\psi}: U \times \mbox{End}(V) \to \mbox{End}(E)_U$ where for any $m \in M$, $\overline{\psi}: \mbox{End}(V) \to \mbox{End}(E_m) = \mbox{End}(V)$ is defined as $T \mapsto \psi_m \circ T \circ \psi_m^{-1}$. It follows that $\mbox{End}(\xi)$ is a locally trivial Lie-Yamaguti algebra bundle with respect to this charts with fibres isomorphic to $\mbox{End}(V).$ Observe that the $2$-field of brackets are given by $m \mapsto [~, ~]_m, ~m \in M,$  $[~, ~]_m$ being the usual commutator Lie bracket of $\mbox{End}(E_m)$  and the $3$-field of brackets are $m \mapsto \{~, ~, ~\}_m, ~m \in M,$ where 
	$$\{~, ~, ~\}_m := [[~, ~]_m,~]_m.$$
\end{exa}

Given a locally trivial Lie-Yamaguti algebra bundle with fibres isomorphic to a given Lie-Yamaguti algebra, one can obtain many other examples of Lie-Yamaguti algebra bundles out of it. The precise statement is given by the following proposition which is straightforward to prove.

\begin{prop}\label{characteristic_sub bundle}\label{characteristic_subbundle}
	Let $\xi = (L,p,M,[~,~],\{~,~,~\})$ be a locally trivial Lie-Yamaguti algebra bundle with fibers isomorphic to a given Lie-Yamguti algebra $\mathfrak{g}$. Let $\mathfrak{h}$ be a subalgebra of $\mathfrak{g}$ such that $\varphi(\mathfrak{h}) = \mathfrak{h}$ for all $\varphi \in \operatorname{Aut}_{\operatorname{LYA}}(\mathfrak{g})$. Then there is a well-defined locally trivial Lie-Yamaguti algebra sub bundle $\eta$ of $\xi$ with total space  $K \subset L$ such that any Lie-Yamaguti algebra bundle chart $\psi: U \times \mathfrak{g} \to L_U$ of $\xi$ restricts to a Lie-Yamaguti algebra bundle chart $U \times \mathfrak{h} \to K_U$ of $\eta$. 
\end{prop}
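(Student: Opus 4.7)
The plan is to construct the sub-bundle fiberwise using the charts of $\xi$, with the automorphism-invariance of $\mathfrak{h}$ guaranteeing that the construction does not depend on the chart used. First I would fix a local trivialization atlas $\{\psi_i: U_i\times\mathfrak{g}\to L_{U_i}\}$ of $\xi$ in which every $\psi_{i,m}$ is a Lie-Yamaguti algebra isomorphism, and define
$$K := \bigsqcup_{m\in M} K_m, \qquad K_m := \psi_{i,m}(\mathfrak{h})\subseteq L_m \quad\text{for any } i \text{ with } m\in U_i.$$
The first task is to verify that $K_m$ is independent of the chart: if $m\in U_i\cap U_j$, then $\psi_{j,m}^{-1}\circ\psi_{i,m}:\mathfrak{g}\to\mathfrak{g}$ is a composite of Lie-Yamaguti algebra isomorphisms, hence lies in $\operatorname{Aut}_{\operatorname{LYA}}(\mathfrak{g})$, and the hypothesis forces it to preserve $\mathfrak{h}$; thus $\psi_{i,m}(\mathfrak{h})=\psi_{j,m}(\mathfrak{h})$.

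Next I would equip $K$ with the structure of a smooth vector bundle. For each $i$, restricting $\psi_i$ yields a bijection $\overline{\psi}_i:U_i\times\mathfrak{h}\to K_{U_i}$, and the compatibility above guarantees that the transition maps $\overline{\psi}_j^{-1}\circ\overline{\psi}_i:(U_i\cap U_j)\times\mathfrak{h}\to(U_i\cap U_j)\times\mathfrak{h}$ are precisely the restrictions to $\mathfrak{h}$ of the smooth transitions of $\xi$. These are smooth and fiberwise linear, so by the standard vector bundle construction (from transition data), $K$ inherits a well-defined smooth vector bundle structure over $M$ with typical fibre $\mathfrak{h}$, and the inclusion $K\hookrightarrow L$ is a smooth vector bundle monomorphism.

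The remaining step is to transport the Lie-Yamaguti algebra structure. Since $\mathfrak{h}$ is a subalgebra of $\mathfrak{g}$ and $\psi_{i,m}$ is a Lie-Yamaguti isomorphism from $\mathfrak{g}$ onto $L_m$, the fibre $K_m=\psi_{i,m}(\mathfrak{h})$ is closed under both $[~,~]_m$ and $\{~,~,~\}_m$, so these brackets restrict to a $2$-field and a $3$-field of brackets on $K$, and each restricted $\overline{\psi}_{i,m}$ is a Lie-Yamaguti isomorphism onto $K_m$. This realizes $\eta=(K,p|_K,M)$ as a locally trivial Lie-Yamaguti algebra sub-bundle of $\xi$, and by construction the chart restriction property holds for the chosen atlas; for an arbitrary Lie-Yamaguti bundle chart $\psi:U\times\mathfrak{g}\to L_U$ of $\xi$ the same automorphism-invariance argument applied to $\psi_{i,m}^{-1}\circ\psi_m$ shows $\psi_m(\mathfrak{h})=K_m$, so $\psi$ restricts to a chart of $\eta$ as required.

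The main subtlety, and really the only non-formal point, is the well-definedness of $K_m$: everything rests on identifying $\psi_{j,m}^{-1}\circ\psi_{i,m}$ as an element of $\operatorname{Aut}_{\operatorname{LYA}}(\mathfrak{g})$ and invoking the hypothesis $\varphi(\mathfrak{h})=\mathfrak{h}$. Once this is established, the smoothness, bundle structure, and compatibility of brackets follow from standard vector-bundle-from-transition-data considerations, and the chart-restriction statement is automatic.
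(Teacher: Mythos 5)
Your argument is correct and is precisely the ``straightforward'' proof the paper alludes to but omits: the key observation that the fibrewise transition maps $\psi_{j,m}^{-1}\circ\psi_{i,m}$ lie in $\operatorname{Aut}_{\operatorname{LYA}}(\mathfrak{g})$ and hence preserve $\mathfrak{h}$ is exactly what makes $K_m$ well defined, and the rest follows from the standard reconstruction of a bundle from restricted transition data. No gaps; this matches the intended proof.
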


As an application of the above result we obtain the following example. 

\medskip
Recall that for a given Lie-Yamaguti algebra $(\mathfrak g, [~,~], \{~, ~, ~ \})$ a linear map $D: \mathfrak g \to \mathfrak g$ is called a derivation of $\mathfrak g$ if for all $x, y, z \in \mathfrak g$ 
$$D([x,y]) = [D(x),y] + [x,D(y)],$$  
$$D\{x,y,z\} = \{D(x),y,z\} + \{x,D(y),z\} + \{x,y,D(z)\}.$$
For any $x,y \in \mathfrak g$ the map $D(x,y): \mathfrak g \to \mathfrak g$ defined by $D(x,y)(z) := \{x,y,z\}$ is a derivation and is called an {\bf inner derivation}. We denote the space of derivations of $\mathfrak g$ by $\operatorname{Der}_{\operatorname{LYA}}(\mathfrak{g}).$ 
\begin{exa}\label{derivation-cohomology}
	Let $(\mathfrak g, [~,~], \{~, ~, ~ \})$ be any Lie-Yamaguti algebra and $\xi = (L, p, M,[~,~],\{~,~,~\})$ be a locally trivial Lie-Yamaguti algebra bundle with fibres isomorphic to the Lie-Yamguti algebra $\mathfrak{g}$. Observe that $\operatorname{Der}_{\operatorname{LYA}}(\mathfrak{g}) \subseteq \operatorname{End}_{\operatorname{LYA}}(\mathfrak{g})$ is invariant under automorphisms of the form $\operatorname{End}_{\operatorname{LYA}}(\mathfrak{g}) \to \operatorname{End}_{\operatorname{LYA}}(\mathfrak{g})$ defined as $T \mapsto s \circ T \circ s^{-1}$ for any Lie-Yamaguti algebra automorphism  $s: \mathfrak{g} \to \mathfrak{g}.$ To see this, we need to show that if $T \in \operatorname{Der}_{\operatorname{LYA}}(\mathfrak{g})$ then $s \circ T \circ s^{-1}$ is also a derivation, that is, we need to show $s \circ T \circ s^{-1}[x,y] = [s \circ T \circ s^{-1} x, y] + [x, s \circ T \circ s^{-1}y]$ for $x, y \in \mathfrak g.$ This is true because for $x, y \in \mathfrak g$
	\begin{eqnarray*}
		sTs^{-1}[x,y] 
		&=& sT [s^{-1}x,s^{-1}y] = s[Ts^{-1}x, s^{-1}y] + s[s^{-1}x,Ts^{-1}y] \\
		&=& [sTs^{-1}x,y] + [x,sTs^{-1}y]. 
	\end{eqnarray*}    
	Similarly, we have $sTs^{-1} \{x,y,z\} = \{sTs^{-1}x,y,z\} + \{x,sTs^{-1}y,z\} + \{x,y,sTs^{-1}z\}$ for $x, y, z \in \mathfrak g.$ Hence, $\operatorname{Der}_{\operatorname{LYA}} (\mathfrak{g})$ is invariant under automorphism of the forms stated above. Applying Proposition \ref{characteristic_subbundle}, we obtain a locally trivial Lie-Yamaguti algebra sub bundle $\mbox{Der}(\xi)$ with fibres isomorphic to $\operatorname{Der}_{\operatorname{LYA}}(\mathfrak{g})$ of the Lie-Yamaguti algebra bundle $\mbox{End}(\xi).$ We call it the Lie-Yamaguti algebra bundle of derivations of $\xi.$  
\end{exa}

Next, we discuss an interesting example of a Lie-Yamaguti algebra bundle that arose from the work of M. Kikkawa \cite{MK64, MK75, MK99} to characterize some local geometric properties. We recall some definitions which are necessary to describe our next example.

\medskip
Recall that a linear connection on a smooth manifold $M$ is an $\mathbb R$-bilinear map 
$$\nabla: \chi (M) \times \chi (M) \to \chi (M)$$ 
written $\nabla_XY$ for $\nabla(X,Y)$, satisfying two properties stated below: 
For all $X,Y \in \chi (M)$ 
\begin{itemize}
	\item $\nabla_XY$ is a $C^{\infty}(M)$-linear in $X$.
	\item (Leibniz rule) $\nabla_XY$ satisfies the Leibniz rule in $Y$: For all $f \in C^{\infty} (M)$,
	$$
	\nabla_X(fY) = (Xf)Y+f(\nabla_XY).
	$$  
\end{itemize}

\noindent
Now, let $M$ be a smooth manifold along with linear connection $\nabla$. 
Recall that 
\begin{itemize}
	\item a torsion tensor of the connection $\nabla$ is a 
	$C^{\infty}(M)$-bilinear map  
	$$S: \chi(M) \times \chi(M) \to \chi(M)$$ defined by 
	$$
	S(X,Y) := \nabla_XY - \nabla_YX - [X,Y],~~~X, Y \in \chi (M),
	$$  
	where $[X,Y]$ is the Lie bracket of $\chi (M)$ and
	\item a  curvature tensor of the connection $\nabla$ is a $C^{\infty}(M)$-trilinear map 
	$$R:\chi (M) \times \chi (M) \times \chi (M) \to \chi (M)$$
	defined by 
	$$
	R(X,Y)Z := \nabla_X \nabla_Y Z - \nabla_Y \nabla_X Z - \nabla_{[X,Y]} Z,~~~X, Y, Z \in  \chi(M).$$
\end{itemize}
Recall the following definitions \cite{MK75}.
\begin{defn}
	Let $M$ be a smooth manifold with a connection $\nabla$. Let $S$ and $R$ denote the torsion and curvature tensors of $\nabla,$ respectively. Then, $(M,\nabla)$ is said to be a locally reductive space if $~\nabla S = 0 ~~\&~~ \nabla R =0;$ that is,
	\begin{itemize}
		\item for all $X,Y,Z \in \chi (M)$;  $\nabla_X S(Y,Z) = 0;$
		\item for all $X, U, V, W \in \chi (M)$; $\nabla_X R(U,V)W = 0.$
	\end{itemize}
\end{defn}

\begin{defn}
	Let $G$ be a connected Lie group and $H$ be a closed subgroup of $G.$ Then the homogeneous space $M = G/H$ is said to be reductive if and only if $G$ acts effectively on $M$ and the Lie algebra $\mathfrak g$ of $G$ admits a direct sum decomposition as 
	$$\mathfrak g = \mathfrak m \oplus \mathfrak h,$$ where $\mathfrak h$ is the Lie algebra of $H$ and $\mathfrak m$ is a subspace of $\mathfrak g.$ 
\end{defn}

Next, we recall the notion of homogeneous Lie loops.

\begin{defn} 
	Let $G=(G,\mu)$ be a binary system with the binary operation 
	$$\mu:G \times G \to G$$
	$G$ is a loop if there is a (two-sided) identity $e \in G$, $xe=ex=x ~~ (x \in G)$, and the left and right translations of $G$ by any element $x \in G$, denoted by 
	$$
	L_x,R_x:G \to G;~L_x(y) = xy,~ R_x(y) = yx ~~~~ (y \in G),
	$$
	are permutations of $G$.
\end{defn}

\begin{defn}  
	A loop $G$ is said to have the left inverse property, if for any $x \in G$ there exists an element $x^{-1} \in G$ such that 
	$$
	x^{-1}(xy) = y ~~~~ (y \in G)
	$$
\end{defn}

\begin{defn}
	Let $L_0(G)$ be the group generated by all left inner mappings, i.e., 
	$$
	L_{x,y} = L_{xy}^{-1} \circ L_x \circ L_y ~~~~ (x,y \in G)
	$$
	A loop $G$ is called a left A-loop, if the left inner mapping group $L_0(G)$ is a subgroup of the automorphism group $AUT(G)$ of $G$.
\end{defn}

\begin{defn}
	A Loop $(G,\mu)$ is said to be a homogeneous loop, if it is a left A-loop with the left inverse property. 
\end{defn}

\begin{defn} 
	A homogeneous Lie loop $G$ is a homogeneous loop, and is also a smooth manifold such that the loop multiplication $\mu: G \times G \to G$ is smooth.
\end{defn}

Here are some examples of locally reductive spaces. 

\noindent
\begin{itemize}
	\item Let $G$ be a connected homogeneous Lie loop equipped with the canonical connection.
	
	\item Define $K(G):=$ the closure of $L_0(G)$ in the smooth automorphism group $Aut(G)$ of $G$, and consider the semi-direct product $A(G)=G \times K(G)$. Since $G$ is connected, $L_0(G)$ is connected, and consequently $K(G)$ is also connected.  $A(G)$ is also a connected Lie group with the product manifold structure. Further $A(G)$ contains $K(G)$ as a closed subgroup.
	
	\item The homogeneous space $A(G)/K(G)$ is reductive. 
\end{itemize} 

\noindent
Consider the reductive homogeneous space $A(G)/K(G)$ equipped with the canonical connection. Then, we have the following results from \cite{MK75}.
\begin{thm}
	For a connected homogeneous Lie loop $G$, the map 
	$$i:G \to A(G)/K(G), ~~~i(x) = x \times K(G)$$ is a connection preserving loop isomorphism onto $A(G)/K(G)$ with multiplication
	$$(x \times K(G)).(y \times K(G)) = (xy) \times K(G) ~~~~~ (x,y \in G)$$ 
	with respect to the canonical connections on $G$ and $A(G)/K(G)$.
\end{thm}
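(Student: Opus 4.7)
The plan is to verify, in sequence: (a) the map $i$ is a well-defined smooth bijection; (b) it carries the loop multiplication of $G$ to the prescribed coset multiplication on $A(G)/K(G)$, giving a loop isomorphism; and (c) it pulls back the canonical connection of the reductive homogeneous space $A(G)/K(G)$ to Kikkawa's canonical connection on $G$.

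For (a), I would exploit the semidirect product structure $A(G) = G \rtimes K(G)$, whose multiplication is $(x,\alpha)(y,\beta) = (x\cdot\alpha(y),\,\alpha\beta)$ since $K(G) \subseteq \operatorname{Aut}(G)$. Every element factors uniquely as $(x, e_{K(G)})(e_G, \alpha)$, so each coset in $A(G)/K(G)$ admits a unique representative of the form $(x, e_{K(G)})$. This gives bijectivity of $i$ and, at the same time, well-definedness of the multiplication $(x\times K(G))\cdot(y\times K(G)) := (xy)\times K(G)$. Smoothness of $i$ is clear from its factorization $G \hookrightarrow A(G) \to A(G)/K(G)$; since $i$ is an injective immersion of smooth manifolds of equal dimension, it is a local diffeomorphism, and being a continuous bijection, a global diffeomorphism. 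Part (b) is then immediate from the construction: $i(xy) = (xy)\times K(G) = (x\times K(G))\cdot(y\times K(G)) = i(x)\cdot i(y)$, and the identity and left-inverse properties transfer from $G$ verbatim, so $A(G)/K(G)$ becomes a homogeneous loop and $i$ a loop isomorphism.

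The main work, and the main obstacle, is (c). The canonical connection on $A(G)/K(G)$ is Nomizu's connection associated to the reductive decomposition $\mathfrak{a}(G) = \mathfrak{m}\oplus\mathfrak{k}(G)$, where $\mathfrak{k}(G) = \operatorname{Lie}(K(G))$; its geodesics through the identity coset are the curves $t\mapsto \exp_{A(G)}(tX)\cdot K(G)$ for $X\in\mathfrak{m}$, with parallel velocity. Kikkawa's canonical connection on the homogeneous Lie loop $G$ is characterized by analogous one-parameter subloop geodesics through $e_G$. I would identify $\mathfrak{m}\cong T_{e_G}G$ via $di_{e_G}$, and then argue in two steps. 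First, one checks that $i$ sends one-parameter subloops of $G$ through $e_G$ onto the Nomizu-geodesics through $e_G\times K(G)$; both families are determined by the same initial velocities in $\mathfrak{m}$. Second, one verifies that $i$ intertwines left translations $L_x\colon G\to G$ with the left action of $(x,e_{K(G)})\in A(G)$ on $A(G)/K(G)$; this step uses crucially that $L_0(G)\subseteq K(G)$, so conjugations by left inner mappings $L_{x,y}$ are absorbed into $K(G)$ and do not obstruct the intertwining. Since both canonical connections are invariant under these transitive actions (intertwined by $i$), agreement at the origin propagates to a global equality. The technical heart of the argument is matching Kikkawa's loop-theoretic geodesics with the orbits of $\mathfrak{m}$-directed one-parameter subgroups of $A(G)$, which is precisely where the hypothesis that $G$ is a homogeneous Lie loop enters decisively.
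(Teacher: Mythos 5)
The paper does not prove this statement at all: it is recalled verbatim from Kikkawa \cite{MK75} (``we have the following results from \cite{MK75}''), so there is no internal argument to compare yours against. Judged on its own terms, your proposal contains a genuine error at its foundation. You take the multiplication on $A(G)=G\times K(G)$ to be $(x,\alpha)(y,\beta)=(x\cdot\alpha(y),\,\alpha\beta)$ ``since $K(G)\subseteq\operatorname{Aut}(G)$''. That formula yields a group only when $G$ is associative: with it, $\bigl((x,1)(y,1)\bigr)(z,1)=((xy)z,1)$ while $(x,1)\bigl((y,1)(z,1)\bigr)=(x(yz),1)$, and these differ in a genuine loop, so your $A(G)$ is not a group. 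Kikkawa's enveloping group carries the twisted product $(x,\alpha)(y,\beta)=\bigl(x\cdot\alpha(y),\,L_{x,\alpha(y)}\circ\alpha\circ\beta\bigr)$; its associativity is exactly what the left A-loop and left inverse properties purchase, and the correction term $L_{x,\alpha(y)}$ lies in $L_0(G)\subseteq K(G)$. That same containment is the reason the product of canonical representatives, $(x,1)(y,1)=(xy,\,L_{x,y})$, still lies in the coset $(xy)\times K(G)$, which is the actual mechanism making $i$ multiplicative. Your unique-factorization and bijectivity claims do survive the correction (because $L_{x,e}=\mathrm{id}$), but as written part (a) starts from a structure that does not exist, and part (b) silently relies on the fact $L_{x,y}\in K(G)$ that you only invoke later.

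Part (c) is a plan rather than a proof. You correctly identify the two things that must be checked --- that $i$ matches Kikkawa's loop-theoretic geodesics through $e$ with the Nomizu geodesics $t\mapsto\exp_{A(G)}(tX)\cdot K(G)$ for $X\in\mathfrak m$, and that $i$ intertwines left translations of $G$ with the action of $A(G)$ --- but you carry out neither, and the first is precisely the substantive content of Kikkawa's theorem: it requires relating the one-parameter subloops of $G$ and the tangent Lie triple algebra to the reductive decomposition of $\operatorname{Lie}(A(G))$. If the intention is to supply a proof rather than a citation, that computation must actually be done; otherwise the honest course is the one the paper takes, namely to quote the result from \cite{MK75}.
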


\noindent 
As a result, any connected homogeneous Lie loop with canonical connection can be identified with a reductive homogeneous space with canonical connection. The following result of M. Kikkawa tells us that any reductive homogeneous space with canonical connection is locally reductive. 

\begin{thm} 
	Let $S$ and $R$ denote the torsion and curvature tensors of the canonical connection $\nabla$ of a reductive homogeneous space $M=G/H$, respectively. Then $\nabla$ is locally reductive, i.e., $ \nabla S=0$ and $\nabla R = 0$.
\end{thm}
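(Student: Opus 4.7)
The plan is to exploit the $G$-invariance of the canonical connection $\nabla$ and reduce the statement to a computation at the base point $o = eH \in M$. Since $\nabla$ is $G$-invariant, so are its torsion $S$ and curvature $R$; consequently both $\nabla S$ and $\nabla R$ are $G$-invariant tensor fields on $M$. Because $G$ acts transitively on $M = G/H$, it suffices to verify $(\nabla S)_o = 0$ and $(\nabla R)_o = 0$ at the single point $o$.

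The core ingredient is the defining property of the canonical connection of a reductive homogeneous space: for every $X \in \mathfrak{m}$, the curve $\gamma_X(t) := \exp(tX) \cdot o$ is a $\nabla$-geodesic, and the parallel transport operator $\tau_t^X \colon T_o M \to T_{\gamma_X(t)} M$ along $\gamma_X$ coincides with the differential $(d L_{\exp(tX)})_o$ of the group action by $\exp(tX)$. Equivalently, the canonical connection is the unique $G$-invariant connection whose Nomizu form $\mathfrak{m} \to \mathfrak{gl}(\mathfrak{m})$ vanishes identically. I would first establish (or cite) this characterization via the standard identification $T_o M \cong \mathfrak{m}$ and the description of invariant connections by their Nomizu map.

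With this in hand, the verification is uniform in the tensor. Let $T$ denote either $S$ or $R$. On the one hand, $G$-invariance of $T$ gives
$$
T_{\gamma_X(t)} \;=\; (d L_{\exp(tX)})_o \cdot T_o,
$$
where the dot denotes the natural action on the relevant tensor bundle. On the other hand, the key property above says that the parallel translate of $T_o$ along $\gamma_X$ is exactly $(d L_{\exp(tX)})_o \cdot T_o$ as well. Hence $T$ is parallel along every geodesic $\gamma_X$ through $o$, so $(\nabla_X T)_o = 0$ for all $X \in \mathfrak{m} \cong T_o M$. This yields $(\nabla T)_o = 0$, and $G$-invariance of $\nabla T$ propagates the vanishing to every point of $M$. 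Applied to $T = S$ and $T = R$, this proves $\nabla S = 0$ and $\nabla R = 0$.

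The main obstacle — and the only step requiring genuine input beyond formal invariance — is the identification of $\nabla$-parallel transport along $\gamma_X$ with the differential of the group action. This is not a consequence of $G$-invariance alone: one needs the specific feature of the canonical connection that its connection form restricted to $\mathfrak{m}$ is zero. Once this is set up (for instance through Nomizu's construction of invariant connections), the rest of the argument is essentially tensor-functoriality of the $G$-action, and $S$ and $R$ are handled identically since nothing in the argument used the tensor type of $T$.
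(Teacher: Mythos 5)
Your argument is correct: it is the standard proof, resting on Nomizu's characterization of the canonical connection (vanishing Nomizu map on $\mathfrak{m}$, hence parallel transport along $\exp(tX)\cdot o$ equals $dL_{\exp(tX)}$), from which parallelism of any $G$-invariant tensor, in particular $S$ and $R$, follows. The paper itself offers no proof --- it quotes the theorem from Kikkawa \cite{MK75} (ultimately Nomizu \cite{KN}) --- and your argument is essentially the one found in those sources, so there is nothing to reconcile.
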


\noindent

\begin{cor}
	Any connected homogeneous Lie loop with the canonical connection is a locally reductive space.  
\end{cor}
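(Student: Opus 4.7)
The plan is to deduce this corollary directly by chaining the two preceding theorems. Starting from a connected homogeneous Lie loop $G$ equipped with its canonical connection $\nabla_G$, the first theorem provides the map
$$i : G \longrightarrow A(G)/K(G), \qquad i(x) = x \times K(G),$$
which is a smooth loop isomorphism and, crucially, a \emph{connection preserving} diffeomorphism onto the reductive homogeneous space $A(G)/K(G)$ equipped with its canonical connection $\nabla$. So the first step is just to invoke this theorem and treat $i$ as an isomorphism of manifolds-with-connection.

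Next, I would apply the second theorem to the reductive homogeneous space $M = A(G)/K(G)$ with canonical connection $\nabla$. That theorem asserts precisely that $\nabla$ is locally reductive, i.e.\ $\nabla S = 0$ and $\nabla R = 0$, where $S$ and $R$ denote the torsion and curvature tensors of $\nabla$. Thus the target of $i$ is a locally reductive space.

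The remaining step is to transfer this property back along $i$. Since $i$ is a diffeomorphism preserving the connections, it sends $\nabla_G$ to $\nabla$, and hence sends the torsion and curvature tensors of $\nabla_G$ to those of $\nabla$; symbolically $i_* S_G = S$ and $i_* R_G = R$, and $i_* (\nabla_G)_X T = \nabla_{i_*X}(i_*T)$ for any tensor $T$ on $G$. Consequently
$$i_*\bigl((\nabla_G)_X S_G\bigr) = \nabla_{i_*X} S = 0, \qquad i_*\bigl((\nabla_G)_X R_G\bigr) = \nabla_{i_*X} R = 0,$$
for every $X \in \chi(G)$, and since $i_*$ is a fibrewise isomorphism this forces $(\nabla_G) S_G = 0$ and $(\nabla_G) R_G = 0$. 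This is exactly the condition that $(G, \nabla_G)$ is a locally reductive space, completing the proof.

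The argument is essentially a bookkeeping one; there is no genuine obstacle beyond noting the naturality of the torsion and curvature constructions under connection preserving isomorphisms. The potentially delicate point is verifying that $i$ really transports $\nabla S$ and $\nabla R$ to $\nabla_G S_G$ and $\nabla_G R_G$ respectively, but this is forced by the fact that $i$ preserves the connection and all three operations $S$, $R$, and covariant differentiation are defined purely from $\nabla$ and the Lie bracket of vector fields, both of which are preserved by any diffeomorphism that preserves the connection.
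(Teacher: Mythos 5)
Your proposal is correct and follows exactly the route the paper intends: the corollary is presented as an immediate consequence of the two preceding theorems, identifying $G$ with $A(G)/K(G)$ via the connection preserving isomorphism $i$ and then applying the local reductivity of the canonical connection on that reductive homogeneous space. Your extra care in checking that $\nabla S = 0$ and $\nabla R = 0$ transport along $i$ is a correct and welcome elaboration of the bookkeeping the paper leaves implicit.
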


\noindent
Below is a list of some examples of homogeneous Lie loops.

\begin{exa} 
	Any Lie group is a homogeneous Lie loop.
\end{exa}
\begin{exa}
	The set of all positive definite real symmetric matrices, denoted by $P_n$, is a homogeneous Lie loop. Loop multiplication $\mu$ being 
	$$\mu (X, Y) = X^{\frac{1}{2}}Y X^{\frac{1}{2}},~~ X, Y \in P_n.$$
\end{exa}

We are now in a position to describe a Lie-Yamaguti algebra bundle which arose from the work of M. Kikkawa.

\medskip
Since any connected homogeneous Lie loop with canonical connection is a locally reductive space, we obtain the following example (cf. \cite[Theorem $7.2$]{MK75}).

\begin{exa}\label{kikkawa}
	Let $M$ be a connected homogeneous Lie loop with the canonical connection. Let the associated torsion and curvature tensors be $S$ and $R$, respectively. Let $\xi = (TM, p, M)$ be the tangent bundle of $M.$ Define a $2$-field of brackets and a $3$-field of brackets on $M$ as follows:
	$$m\mapsto [a,b]_m = S_m(a,b); ~~ m\mapsto \{a,b,c\} = R_m(a,b) c ~~~~~ (a,b,c \in T_mG).$$
	Then $\xi$ is a Lie-Yamaguti algebra bundle.
\end{exa}

Next, we discuss a general existence theorem for locally trivial Lie-Yamaguti algebra bundle.

\begin{defn}
	Let  $(\mathfrak g, [~,~], \{~,~,~\})$ be a Lie-Yamaguti algebra and $G$ be a Lie group. We say that $G$ acts on $\mathfrak g$ if there exists a smooth homomorphism 
	$$\phi : G \to {\mbox{Aut}}_{\mbox{LYA}}(\mathfrak g), ~~g\mapsto \phi_g.$$ 
	Given such an action $\phi,$ we simply write $ga =: \phi_g(a),~g\in G,~ a \in \mathfrak g.$
\end{defn}

Note that any closed subgroup of ${\mbox{Aut}}_{\mbox{LYA}}(\mathfrak g)$  acts smoothly on $\mathfrak g$ and is a closed subgroup of the general linear group $GL_n(\mathbb R).$ 

\begin{defn}
	Let $G$ be a Lie group and $M$ a smooth manifold. A family of smooth transition maps in $M$ with values in $G$ is  an atlas 
	$\{U_i: i\in I\}$ of $M$ together with a collection of smooth maps
	$$g_{ij} : U_i \cap U_j \to G,~~ i, j \in I,$$ where $I$ is any index set which we may assume to be countable satisfying the following condition.
	\noindent
	For $i, j, k \in I,$ with $U_i\cup U_j \cup U_k \neq \emptyset,$
	$$g_{ij}(m)\cdot g_{jk}(m) = g_{ik}(m),~ m \in  U_i \cap U_j \cap U_k.$$
\end{defn}

It follows from the above condition by taking $i = j= k$ that for any $i\in I,$ $g_{ii}(m),~m \in M$ is the identity of $G.$ The above condition is known as the cocycle condition.

\medskip
We have the following existence result of locally trivial Lie-Yamguti algebra bundles whose proof is parallel to the proof of clutching construction in the theory of fibre bundles \cite{steenrod}. We outline the sketch of the proof.

\begin{thm}\label{existence-LYAB}
	Let $(\mathfrak g, [~,~], \{~, ~, ~\})$ be a Lie-Yamaguti algebra equipped with a smooth action of a Lie group $G.$ Let $M$ be a smooth manifold with a given countable atlas $\{U_i :\ i \in I\}$ together with a family of smooth transition maps
	$$g_{ij} : U_i \cap U_j \to G,~~ i, j \in I,$$
	in $M$ with values in $G.$  Then, there exists a locally trivial Lie-Yamuguti algebra bundle over $M,$ with  $\mathfrak g$ as the fibre, $G$ as the  structure group  of the bundle and with $\{g_{ij}\}$ as the associated transition maps.
\end{thm}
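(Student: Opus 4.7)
The plan is to run the standard clutching construction, then use the hypothesis that $G$ acts by Lie-Yamaguti automorphisms to transport the algebraic structure on $\mathfrak g$ to the resulting vector bundle.

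First, I would form the disjoint union $\widetilde L := \bigsqcup_{i \in I}\, U_i \times \mathfrak g$ and introduce the relation identifying $(m,a) \in U_j \times \mathfrak g$ with $(m, g_{ij}(m)\cdot a) \in U_i \times \mathfrak g$ whenever $m \in U_i \cap U_j$. The cocycle condition (together with the fact that $g_{ii}(m) = e$, which follows from it) ensures this is an equivalence relation. Set $L := \widetilde L / \sim$ and let $p : L \to M$ send the class of $(m,a)$ to $m$. The inclusions $U_i \times \mathfrak g \hookrightarrow \widetilde L$ descend to injective maps $\psi_i : U_i \times \mathfrak g \to p^{-1}(U_i)$, and the transitions $\psi_i^{-1}\circ \psi_j$ on $(U_i\cap U_j)\times \mathfrak g$ are given by $(m,a) \mapsto (m, g_{ij}(m)\cdot a)$. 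Since $\phi : G \to \mathrm{Aut}_{\mathrm{LYA}}(\mathfrak g) \subseteq GL(\mathfrak g)$ is smooth and $g_{ij}$ is smooth, these transitions are smooth linear isomorphisms on each fibre, which equips $L$ with the structure of a smooth vector bundle of rank $\dim \mathfrak g$ with structure group $G$ and transition cocycle $\{g_{ij}\}$.

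Next I would install the Lie-Yamaguti structure. On each chart $\psi_i$ define fibrewise operations by transporting those of $\mathfrak g$:
\begin{equation*}
[\psi_i(m,a),\psi_i(m,b)]_m := \psi_i(m,[a,b]), \qquad
\{\psi_i(m,a),\psi_i(m,b),\psi_i(m,c)\}_m := \psi_i(m,\{a,b,c\}).
\end{equation*}
Well-definedness on overlaps is precisely the statement that for each $m\in U_i\cap U_j$ the transition $\phi_{g_{ij}(m)}$ is a Lie-Yamaguti automorphism of $\mathfrak g$, which is our hypothesis on $\phi$: if $(m,a) \sim (m,a')$ and $(m,b)\sim(m,b')$, then $a' = g_{ij}(m)\cdot a$, $b'=g_{ij}(m)\cdot b$, so $[a',b'] = g_{ij}(m)\cdot[a,b]$, i.e.\ $(m,[a',b'])\sim(m,[a,b])$; similarly for the triple bracket. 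Hence the assignments $m \mapsto [~,~]_m$ and $m\mapsto \{~,~,~\}_m$ glue to globally defined sections of the bundles $\mathrm{Hom}(\xi^{\otimes 2},\xi)$ and $\mathrm{Hom}(\xi^{\otimes 3},\xi)$. Their smoothness is immediate from their local description through the smooth charts $\psi_i$, since in each chart they are literally the constant brackets $[~,~]$ and $\{~,~,~\}$ of $\mathfrak g$.

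Finally, each fibre $L_m$ is isomorphic as a vector space to $\mathfrak g$ via any $\psi_{i,m}$ with $m\in U_i$, and by construction $\psi_{i,m}$ intertwines the transported brackets on $L_m$ with $[~,~],\{~,~,~\}$ on $\mathfrak g$. Thus $(L_m,[~,~]_m,\{~,~,~\}_m)$ satisfies axioms (LY1)--(LY6) because $\mathfrak g$ does, and $\psi_{i,m}$ is a Lie-Yamaguti algebra isomorphism, giving local triviality with structure group $G$ and the prescribed transition maps.

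The only genuinely delicate point is the compatibility check on double overlaps: it is essential that $\phi$ takes values in $\mathrm{Aut}_{\mathrm{LYA}}(\mathfrak g)$, not merely in $GL(\mathfrak g)$, for the bracket and triple bracket to descend unambiguously to $L$. All other steps (verifying the cocycle relation yields an equivalence relation, the vector bundle structure, smoothness of the brackets) follow the standard clutching/Steenrod template and reduce to routine bookkeeping once this compatibility is in hand.
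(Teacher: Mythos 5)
Your proposal is correct and follows essentially the same route as the paper: the clutching construction of Steenrod type, gluing $\bigsqcup_i U_i\times\mathfrak g$ by the cocycle $\{g_{ij}\}$ and transporting the brackets of $\mathfrak g$ through the resulting charts. The paper leaves the overlap-compatibility and local-triviality checks as ``routine,'' whereas you spell out the key point that the transitions act by Lie-Yamaguti automorphisms; this is a faithful, slightly more detailed version of the same argument.
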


\begin{proof}
	Consider the following space where $I$ has the discrete topology
	$$\tilde{L} := \bigcup _{i \in I}\{(u, a,  i)| u\in U_i,~ a\in \mathfrak g, ~ i\in I\}.$$ Define an equivalence relation on $\tilde{L}$ by $(u, a, i) \sim (v, b, j)$ if and only if $u = v,~b = g_{ij}(u)a.$  Let $L = \tilde{L}/\sim.$  Let us denote the equivalence class of $(u, a, i)$ by $[u, a, i].$  Let $q : \tilde{L} \to L, (u, a,i) \mapsto [u, a, i]$ be the quotient map and $p : L \to M,~~ [u, a, i] \mapsto u$ be the natural projection map. 
	
	If $q_i = q|_{(U_i \times \mathfrak g \times \{i\})},$ then it is readily seen that $q_i$ is injective, $(q_i(U_i \times \mathfrak g \times \{i\}), q_i^{-1})$ is a smooth chart on $L$ and  $p : L \to M$ is a smooth vector bundle. 
	
	We now show that $\xi = (L, p, M)$ is a Lie-Yamaguti algebra bundle. Let $m \in M$ and $\xi_m$ be the fibre over $m.$ Define a $2$-field of brackets $m \mapsto [~, ~]_m$ and a $3$-filed of brackets $m \mapsto \{~, ~, ~\}_m$ as follows. Note that for $ i \in I,$ the map 
	$$ \{\psi_i: U_i\times \mathfrak g \rightarrow p^{(-1)}(U_i)\}$$ defined by 
	$$ \psi (u, a) = q(u, a, i),~u \in U_i,~ a \in \mathfrak g$$ gives the local trivialization of the vector bundle $\xi.$  Let $\psi_{i,m},~~m \in U_i\subset  M$ denotes the restriction of $\psi_i$ to $\{m\}\times \mathfrak g.$ 
	
	\noindent
	Let $a, b, c \in \xi_m,~m \in M.$  Choose $i \in I$ such that $m \in U_i.$ Define
	$$[a, b]_m := \psi_{i,m}([\psi_{i,m}^{-1}(a), \psi_{i, m}^{-1}(b)]),$$
	$$\{a, b, c\}_m := \psi_{i,m}(\{\psi_{i,m}^{-1}(a), \psi_{i, m}^{-1}(b), \psi_{i, m}^{-1}(c)\}).$$ Then, it is routine to verify that $\xi$ is a locally trivial Lie-Yamaguti algebra bundle with fibre $\mathfrak g.$
\end{proof}

\begin{rem}
	The above theorem provides a general method of constructing a locally trivial Lie-Yamaguti algebra bundle from any Lie group of symmetry of a given Lie-Yamaguti algebra on a manifold, equipped with a family of smooth transition maps taking values in the group of symmetry. In particular, we may apply the above method for any Lie group of symmetry of the Lie-Yamaguti algebras discussed in the previous section to construct examples of Lie-Yamaguti algebra bundles.
\end{rem}

\begin{defn}\label{morphism-LYAB}
	Let $\xi = (L, p, M)$ and $\xi^\prime = (L^\prime, p^\prime, M^\prime)$ be two Lie-Yamaguti algebra bundles.
	A homomorphism $\phi: (L, p, M) \rightarrow (L^\prime, p^\prime, M^\prime)$ from $\xi$ to $\xi^\prime$ is a vector bundle morphism $(\tilde{\phi}, \phi),$ where $\tilde{\phi}: L \rightarrow L^\prime,$ is the map between total spaces and  $\phi : M\rightarrow M^\prime$ is the map of the base spaces such that $ \tilde{\phi}|_{L_m}: L_m  \rightarrow L^\prime_{\phi (m)}$ is a Lie-Yamaguti algebra homomorphism, where $m\in M.$ 
	
	\medskip
	A homomorphism $\phi: \xi \rightarrow \xi^\prime$ of two Lie-Yamaguti algebra bundles over the same base space $M$ is a vector bundle morphism $\phi: \xi \rightarrow \xi^\prime$ such that $ \phi|_{\xi_m}: \xi_m  \rightarrow \xi^\prime_m$ is a Lie-Yamaguti algebra homomorphism for all $m \in M.$ Moreover, if $\phi|_{\xi_m}$ is a linear bijection then $\xi=(L, p, M)$ is said to be isomorphic to $\xi^\prime= (L^\prime, p^\prime, M).$
\end{defn}

\begin{defn}
	A Lie-Yamaguti algebra bundle $\xi$ is said to be trivial if it is isomorphic to a product Lie-Yamaguti algebra bundle.
\end{defn}

\section{Representation of Lie-Yamaguti Algebra Bundles}\label{$4$} 
The aim of this section is to introduce the notion of representation of Lie-Yamaguti algebra bundles.

\medskip
Our definition of representation of a Lie-Yamaguti algebra bundle is based on the definition of representation of a Lie-Yamaguti algebra \cite{KY}.

\begin{defn}
	Let $\xi = (L, p, M)$ be a Lie-Yamaguti algebra bundle and $\eta = (E, q, M)$ be a vector bundle. For any point $m\in M,$ let $\eta_m$ denote the fibre $ \eta_m = q^{-1}(m)$ of the bundle $\eta$ over $m.$
	
	\medskip
	A representation of the Lie-Yamaguti algebra bundle $\xi$ on the vector bundle $\eta$ consists of vector bundle morphisms 
	$$\rho : \xi \to \textrm{End} (\eta), ~~D,~ \theta : \xi \otimes \xi \to  \textrm{End} (\eta)$$ such that these maps restricted to each fibre satisfy the conditions (RLYB$1$) - (RLYB$6$)  as described below, where the bilinear maps 
	$$D|_{\xi_m}, ~\theta|_{\xi_m} : \xi_m\times \xi_m \to \textrm{End}(\eta_m),$$ obtained by restricting 
	$D,~\theta$  to a fibre $\xi_m$ are denoted by $D_m$ and $\theta_m,$ respectively and  similarly, 
	$\rho_m$ is the linear map $$\rho|_{\xi_m} : \xi_m \to \textrm{End}(\eta_m).$$
	For any $m \in M$ and $a, b, c, d \in \xi_m,$
	\begin{align}
		&D_m(a,b) + \theta_m(a,b) - \theta_m(b,a) = [\rho_m(a),\rho_m(b)]_m - \rho_m([a,b]); \label{RLYB1} \tag{RLYB1}
		\\
		&\theta_m(a,[b,c]_m) - \rho_m(b) \theta_m(a,c) + \rho_m(c) \theta_m(a,b) = 0; \label{RLYB2} \tag{RLYB2} \\
		&\theta_m([a,b]_m,c) - \theta_m(a,c) \rho_m(b) + \theta_m(b,c) \rho_m(a) = 0;
		\label{RLYB3} \tag{RLYB3} \\
		&\theta_m(c,d) \theta_m(a,b) - \theta_m(b,d) \theta_m(a,c)
		- \theta_m(a,\{b,c,d\}_m) + D_m(b,c) \theta_m(a,d) = 0; 
		\label{RLYB4} \tag{RLYB4} \\
		&[D_m(a,b),\rho_m(c)]_m = \rho_m(\{a,b,c\}_m);
		\label{RLYB5} \tag{RLYB5} \\
		&[D_m(a,b),\theta_m(c,d)]_m = \theta_m(\{a,b,c\}_m,d) + \theta_m(c,\{a,b,d\}_m). \label{RLYB6} \tag{RLYB6}
	\end{align}
\end{defn}

We shall denote a representation of a Lie-Yamaguti algebra bundle $\xi$ on a vector bundle $\eta$ as described above by $(\eta;~\rho,~D,~ \theta).$ A representation $(\eta;~\rho,~D,~ \theta)$ of a Lie-Yamaguti algebra bundle $\xi$ is also called a $\xi$-module.

\begin{rem}
	Like a representation of a Lie-Yamaguti algebra \cite{KY},  given a representation $(\eta;~\rho,~D,~ \theta)$ of a Lie-Yamaguti algebra bundle $\xi,$ we have for every $m\in M$
	\begin{equation}
		D_m([a, b]_m, c) + D_m([b, c]_m, a) + D_m([c, a]_m, b) = 0,\label{RLYB7} \tag{RLYB7} 
	\end{equation}
	for any $a,~ b,~ c \in \xi_m.$ 
\end{rem}

\begin{exa}\label{adjoint-representation}
	Given a  Lie-Yamaguti algebra bundle $\xi$ over $M$, we may consider $\xi$ as a $\xi$-module which gives us the adjoint representation of $\xi$ on itself. Explicitly, for each $m \in M,$ $\rho_m,~D_m,~\theta_m$ are given by
	$$ \rho_m (a): b \mapsto  [a, b]_m;~~D_m(a, b): c \mapsto \{a, b, c\}_m;~~ \theta_m (a, b) : c \mapsto \{c, a, b\}_m,$$
	for any $a,~ b,~c \in \xi_m.$ 
\end{exa}

\begin{rem}\label{restriction}
	Observe that for a $0$-dimensional manifold $M = \{pt\},$ a Lie-Yamaguti algebra bundle $\xi$ over $M$ is simply a Lie-Yamaguti algebra and a representation
	$\eta$ of $\xi$ in this case, reduces to a representation of the Lie-Yamaguti algebra $\xi.$ More generally, given any representation $(\eta; \rho, D, \theta)$ of a Lie-Yamaguti algebra bundle $\xi = (L, p, M)$ on the vector bundle $\eta = (E, q, M)$ over a smooth manifold $M,$ $(\eta_m; \rho_m, D_m, \theta_m)$ may be viewed as a representation of the Lie-Yamaguti algebra $\xi_m$ for any $m\in M.$
\end{rem}

Given a Lie-Yamaguti algebra bundle together with a representation we construct a new Lie-Yamaguti algebra bundle as follows.

\begin{exa}\label{semi-direct-bundle}
	Let  $\xi=(L, p, M)$ be a given Lie-Yamguti algebra bundle with its $2$-field of brackets and and a $3$-field of brackets denoted by
	$$m\mapsto [~, ~]_m,~~ m\in M$$ 
	$$m \mapsto \{~,~, ~\}_m,~~m \in M.$$  Let $\eta = (E,q,M)$ be a vector bundle which is a representation $(\eta;~\rho,~D,~ \theta)$ of $\xi.$ Then, $\xi \oplus \eta$ becomes a Lie-Yamaguti algebra bundle with respect to the following $2$ and $3$-fields of brackets
	\begin{equation}\label{semidirect_LYAB_rel1}
		[x+u,y+v]^{\ltimes}_m :=  [x, y]_m  + \rho_m(x) v - \rho_m(y)u
	\end{equation}
	\begin{equation} \label{semidirect_LYAB_rel2}
		\{x+u,y+v,z+w\}^{\ltimes}_m :=  \{ x,y,z \}_m + D_m(x,y) w - \theta_m(y,z) u
	\end{equation}
	for all $x, y, z \in \xi_m$ and $u,v,w \in \eta_m$. This bundle is called the semi-direct product bundle of $\xi$ and $\eta$ and is denoted by $\xi \ltimes \eta$.  
\end{exa}

Moreover, a representation of $\eta$ of a Lie-Yamaguti algebra bundle $\xi$ is characterized by the semi-direct product construction in the following sense.

\begin{prop}
	Let  $\xi=(L, p, M)$ be a given Lie-Yamaguti algebra bundle with its $2$-field of brackets and and a $3$-field of brackets denoted by
	$$m\mapsto [~, ~]_m, ~~m\in M, \quad m \mapsto \{~,~, ~\}_m,~~m \in M.$$  Let $\eta = (E,q,M)$ be a vector bundle together with vector bundle morphisms 
	$$\rho : \xi \to \textrm{End} (\eta), ~~D,~ \theta : \xi \otimes \xi \to  \textrm{End} (\eta).$$ Then,  $(\eta;~\rho,~D,~ \theta)$ is a representation of $\xi$ if and only if the Whitney sum bundle $\xi \oplus \eta$ becomes a Lie-Yamaguti algebra bundle with respect to the following $2$-fields and $3$-fields of brackets
	\begin{equation}\label{semidirect_LYAB_rel1}
		[x+u,y+v]^{\ltimes}_m :=  [x, y]_m  + \rho_m(x) v - \rho_m(y)u
	\end{equation}
	\begin{equation} \label{semidirect_LYAB_rel2}
		\{x+u,y+v,z+w\}^{\ltimes}_m :=  \{ x,y,z \}_m + D_m(x,y) w - \theta_m(y,z) u
	\end{equation}
	for all $x, y, z \in \xi_m$ and $u,v,w \in \eta_m$.
\end{prop}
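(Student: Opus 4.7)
\textbf{Reduction to fibres.} The $2$-field and $3$-field of brackets defined by (\ref{semidirect_LYAB_rel1}) and (\ref{semidirect_LYAB_rel2}) are polynomial expressions in the given vector bundle morphisms $[~,~]$, $\{~,~,~\}$, $\rho$, $D$, $\theta$, so they automatically define vector bundle morphisms $(\xi\oplus\eta)^{\otimes 2}\to \xi\oplus\eta$ and $(\xi\oplus\eta)^{\otimes 3}\to \xi\oplus\eta$. Hence the only issue is whether they satisfy the Lie-Yamaguti axioms (LY1)--(LY6) fibrewise. The first step of the proof is therefore to observe that, by the defining property of a Lie-Yamaguti algebra bundle, $\xi\oplus\eta$ is a Lie-Yamaguti algebra bundle with respect to $[~,~]^{\ltimes}_m$ and $\{~,~,~\}^{\ltimes}_m$ if and only if, for every $m\in M$, the vector space $\xi_m\oplus\eta_m$ equipped with these brackets is a Lie-Yamaguti algebra. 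Thus the proposition follows once one proves the purely algebraic statement: for a Lie-Yamaguti algebra $(\mathfrak g,[~,~],\{~,~,~\})$, a vector space $V$, and linear maps $\rho:\mathfrak g\to \operatorname{End}(V)$, $D,\theta:\mathfrak g\otimes\mathfrak g\to\operatorname{End}(V)$, the space $\mathfrak g\oplus V$ endowed with the semi-direct brackets is a Lie-Yamaguti algebra if and only if $(V;\rho,D,\theta)$ is a representation of $\mathfrak g$.

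\textbf{The ``only if'' direction.} This is already established in Example~\ref{semi-direct-bundle}: given a representation $(\eta;\rho,D,\theta)$, the semi-direct construction yields a Lie-Yamaguti algebra bundle.

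\textbf{The ``if'' direction.} Assume $\xi\oplus\eta$ is a Lie-Yamaguti algebra bundle with the prescribed brackets. By the reduction above, fix $m\in M$ and work inside the fibre; I suppress the subscript $m$ for brevity. Axioms (LY1) and (LY2) applied to $[x+u,y+v]^{\ltimes}$ and $\{x+u,y+v,z+w\}^{\ltimes}$ reduce to the (already known) antisymmetries of $[~,~]$ and $\{~,~,~\}$ on $\mathfrak g=\xi_m$ and impose no new condition. The remaining axioms are to be expanded, separated into their $\xi_m$-part and $\eta_m$-part, and each part equated to zero. The $\xi_m$-parts recover the Lie-Yamaguti axioms for $\xi_m$ itself; the $\eta_m$-parts yield identities that must hold for arbitrary choices of the ``slots'' $u,v,w\in \eta_m$ and $x,y,z,\ldots\in\xi_m$. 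Fixing one of $u,v,w$ to be nonzero at a time and setting the others to zero isolates one identity from each axiom, producing exactly the conditions (\ref{RLYB1})--(\ref{RLYB6}): concretely, (LY3) produces (\ref{RLYB1}), (LY4) produces (\ref{RLYB2}) and (\ref{RLYB3}) (from the two inequivalent slots where the input lies in $\eta_m$), (LY5) produces a restatement of (\ref{RLYB5}) together with compatibility equivalent to (\ref{RLYB2}), and (LY6) produces (\ref{RLYB4}) and (\ref{RLYB6}) (and again (\ref{RLYB5}) as a consistency check).

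\textbf{Main obstacle and conclusion.} The substantive work is entirely in the last step: the bookkeeping required to extract each (\ref{RLYB1})--(\ref{RLYB6}) from (LY3)--(LY6) by placing the $\eta$-input in each of the available slots. No identity is truly deep; they all follow by expanding the definitions, separating $\xi_m$- and $\eta_m$-components, and using the already-known Lie-Yamaguti identities on $\mathfrak g=\xi_m$ to cancel the $\xi_m$-parts. Since (\ref{RLYB1})--(\ref{RLYB6}) are derived pointwise and the underlying maps $\rho$, $D$, $\theta$ are smooth, $(\eta;\rho,D,\theta)$ is a representation of $\xi$, completing the proof.
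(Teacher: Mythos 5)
The paper states this proposition without proof, so there is no in-text argument to compare against; the intended proof is surely the fibrewise one you describe (smoothness is automatic since the brackets are assembled from bundle morphisms, and everything reduces to the algebraic statement that the semidirect brackets on $\mathfrak g\oplus V$ satisfy (LY1)--(LY6) precisely when $(V;\rho,D,\theta)$ satisfies (RLYB1)--(RLYB6)). Your reduction step and the citation of Example~\ref{semi-direct-bundle} for the ``only if'' direction are the right frame.

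The gap is in the one step you explicitly wave away. You assert that (LY1) and (LY2) ``impose no new condition,'' but (LY2) is exactly where the verification breaks for the trilinear bracket as printed: the formula $\{x+u,y+v,z+w\}^{\ltimes}_m=\{x,y,z\}_m+D_m(x,y)w-\theta_m(y,z)u$ has a term in $u$ but none in $v$, so antisymmetry under the swap $(x,u)\leftrightarrow(y,v)$ forces $\theta_m\equiv 0$. The bracket must carry the compensating summand $+\,\theta_m(x,z)v$ (compare the expansion of the adjoint representation of Example~\ref{adjoint-representation}), and even after that correction (LY2) yields the antisymmetry of $D_m$ --- a genuine condition, which in the ``only if'' direction has to be deduced from (RLYB1) rather than dismissed. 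Because your plan never performs the expansions, it would ``prove'' an equivalence that is false for the formula as literally stated. Your dictionary between axioms is also off in places: placing the $\eta$-input in the fourth slot of (LY4) yields the cyclic identity (RLYB7) for $D_m$, not (RLYB2); (RLYB2) arises from (LY5) with the $\eta$-input in the first slot, while (RLYB3) comes from (LY4) with the $\eta$-input among the three cyclic arguments; and (LY6) with the $\eta$-input in the last slot produces a commutator identity for $D_m$ that is not on the list (RLYB1)--(RLYB6) and must be shown to follow from it in the ``only if'' direction. Since these slot-by-slot expansions are the entire content of the proposition, they need to be carried out and matched to the axioms, not deferred as bookkeeping.
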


\section{Cohomology of Lie-Yamaguti Algebra Bundle}\label{$5$}
In this section, we introduce cohomology of Lie-Yamaguti algebra bundle with coefficients in a representation. The definition is motivated by the definition of cohomology of a Lie-Yamaguti algebra as introduced in \cite{KY-cohomology}. We use the Remark \ref{restriction} to introduce our definition. 

\begin{defn}
	Let $\xi = (L, p, M)$ be a Lie-Yamaguti algebra bundle and  $(\eta; \rho, D, \theta)$ be a $\xi$-module. Let us denote the $2$-field and the $3$-field of brackets which make the vector bundle $\xi$ a Lie-Yamaguti algebra bundle by
	$m\mapsto [~,~]_m,~~ m \mapsto \{~, ~, ~\}_m,~~ m\in M.$
	Let $C^1(\xi; \eta) = \mbox{Hom}~(\xi; \eta)$ denote the vector space of all vector bundle maps from $\xi$ to $\eta.$ 
	Let $C^0(\xi; \eta)$ be the subspace spanned by the diagonal elements $(f,f) \in C^1(\xi;\eta) \times C^1(\xi,\eta)$. 
	For $n\geq 2,$ let $C^n( \xi; \eta)$  be the space of all vector bundle maps $f : \xi^{\otimes n} \to \eta,$ that is, $f \in\mbox{Hom}~(\xi^{\otimes n}; \eta)$ such that the resulting 
	$n$-linear maps $f_m=f|_{\xi^{\otimes n}_m} : \xi_m \times \cdots \times \xi_m \to \eta_m$ satisfy
	$f_m(x_1, \ldots, x_{2i-1}, x_{2i}, \ldots, x_n) = 0,$ if $x_{2i-1} = x_{2i},~~x_i \in \xi_m, ~i= 1, \ldots, [n/2].$ For $p \geq 1,$ set
	$$C^{(2p, 2p+1)}(\xi; \eta) :=  C^{2p}(\xi;\eta) \times C^{2p+1}(\xi; \eta).$$  Any element $(f, g) \in C^{(2p, 2p+1}(\xi; \eta)$ will be referred to as a $(2p, 2p+1)$-cochain. For $p\geq 1,$ we define a coboundary operator
	$$\delta = (\delta_I, \delta_{II}) : C^{(2p, 2p+1)}(\xi;\eta)  \to  C^{(2p+2, 2p+3)}(\xi;\eta),$$
	$$(f, g) \mapsto \delta(f, g)= (\delta_If, \delta_{II}g)$$ by defining it fibre-wise using the formula introduced by K. Yamaguti \cite{KY-cohomology}. In other words, for any $m \in M,$ 
	$$\delta (f, g)_m = ((\delta_I)_mf_m, (\delta_{II})_mg_m).$$ Explicitly, for $m \in M$ and $x_1, \ldots, x_{2p+2} \in \xi_m,$  
	\begin{align*}
		&(\delta_I)_m f_m (x_1, \ldots, x_{2p+2})\\
		&= (-1)^p [\rho_m(x_{2p+1})g_m(x_1, \ldots,x_{2p}, x_{2p+2}) - \rho_m(x_{2p+2})g_m(x_1, \ldots,x_{2p}, x_{2p+1})\\
		&\quad 
		- g_m(x_1, \ldots, x_{2p}, [x_{2p+1}, x_{2p+2}]_m)]\\
		&\quad 
		+ \sum_{k = 1}^p(-1)^{k+1} D_m(x_{2k-1}, x_{2k})f_m(x_1, \ldots, \hat{x}_{2k-1}, \hat{x}_{2k}, \ldots, x_{2p+2})\\
		&\quad  
		+ \sum_{k=1}^{p+1}\sum_{j = 2k+1}^{2p+2}(-1)^kf_m (x_1, \ldots,\hat{x}_{2k-1}, \hat{x}_{2k}, \ldots,\{x_{2k-1}, x_{2k}, x_j\}_m, \ldots, x_{2p+2}).
	\end{align*}
	Let $x_1, \ldots, x_{2p+3} \in \xi_m.$ Then,
	\begin{align*}
		& (\delta_{II})_m g_m (x_1, \ldots, x_{2p+3})\\
		&= (-1)^p[\theta_m(x_{2p+2}, x_{2p+3})g_m(x_1, \ldots, x_{2p+1})\\ 
		&\quad  
		- \theta_m(x_{2p+1}, x_{2p+3})g_m(x_1, \ldots, x_{2p}, x_{2p+2})]\\
		&\quad 
		+ \sum_{k = 1}^{p+1}(-1)^{k+1} D_m(x_{2k-1}, x_{2k})g_m(x_1, \ldots, \hat{x}_{2k-1}, \hat{x}_{2k}, \ldots, x_{2p+3})\\
		&\quad 
		+ \sum_{k=1}^{p+1}\sum_{j = 2k+1}^{2p+3}(-1)^kg_m (x_1, \ldots,\hat{x}_{2k-1}, \hat{x}_{2k}, \ldots,\{x_{2k-1}, x_{2k}, x_j\}_m, \ldots, x_{2p+3}).
	\end{align*}
\end{defn}

Now observe that for any $m\in M,$ the coboundary operator $\delta_m$ is precisely the coboundary operator for the Lie-Yamaguti algebra $\xi_m$ with coefficient in $\eta_m$ (cf. Remark \ref{restriction}) and since $\delta_m \circ \delta_m = 0$ \cite{KY-cohomology} we obtain the following result.

\begin{lem}
	For $p\geq 1,$ the coboundary operator
	$$\delta = (\delta_I, \delta_{II}) : C^{2p}(\xi;\eta) \times C^{2p+1}(\xi; \eta) \to  C^{2p+2}(\xi;\eta) \times C^{2p+3}(\xi; \eta)$$ satisfy $\delta \circ\delta = 0.$
\end{lem}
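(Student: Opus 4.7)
The plan is to reduce the identity $\delta\circ\delta=0$ to its fibre-wise version, where it becomes Yamaguti's classical coboundary identity for Lie-Yamaguti algebras. The key point is that the coboundary operator was deliberately defined fibre-by-fibre via the formula of \cite{KY-cohomology}, so the global identity is forced by the pointwise one, together with smoothness of the construction.

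First I would make precise the fibre-wise nature of the operator. Given $(f,g)\in C^{(2p,2p+1)}(\xi;\eta)$, for each $m\in M$ the restrictions $f_m$ and $g_m$ are multilinear maps on $\xi_m$ with values in $\eta_m$, and the triple $(\xi_m;[\,,\,]_m,\{\,,\,,\,\}_m)$ is a Lie-Yamaguti algebra with representation $(\eta_m;\rho_m,D_m,\theta_m)$ by Remark \ref{restriction}. Inspecting the explicit formulas for $(\delta_I)_m f_m$ and $(\delta_{II})_m g_m$, they coincide term by term with Yamaguti's coboundary operator for the Lie-Yamaguti algebra $\xi_m$ with coefficients in $\eta_m$, as defined in \cite{KY-cohomology}. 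Hence, as a fibre-wise operator, $\delta_m=((\delta_I)_m,(\delta_{II})_m)$ is precisely Yamaguti's coboundary.

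Next I would invoke Yamaguti's theorem from \cite{KY-cohomology}, which asserts $\delta_m\circ\delta_m=0$ for every Lie-Yamaguti algebra with coefficients in a representation. Applied at each $m\in M$, this gives
\[
(\delta\circ\delta(f,g))_m = \delta_m\circ\delta_m(f_m,g_m) = (0,0),
\]
for every $m\in M$. Since a vector bundle morphism which vanishes on every fibre is the zero morphism, we conclude $\delta\circ\delta(f,g)=(0,0)$ globally, i.e. $\delta\circ\delta=0$ on $C^{(2p,2p+1)}(\xi;\eta)$.

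The only minor obstacle is the bookkeeping check that $\delta(f,g)$ does indeed lie in $C^{(2p+2,2p+3)}(\xi;\eta)$, that is, that $\delta_I f$ and $\delta_{II} g$ are smooth vector bundle maps satisfying the required alternating condition on consecutive pairs of arguments. Smoothness follows from the fact that $\rho,D,\theta$ and the $2$- and $3$-fields of brackets are smooth bundle maps, so each summand in the formulas is a smooth vector bundle map; the alternating property on pairs follows from (LY1)-(LY2) and the analogous alternation built into Yamaguti's formula, verified fibre-wise. Once these routine checks are in place, the reduction to the pointwise identity of Yamaguti completes the proof.
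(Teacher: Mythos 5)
Your proof is correct and follows essentially the same route as the paper: the authors likewise observe that $\delta_m$ is precisely Yamaguti's coboundary operator for the Lie-Yamaguti algebra $\xi_m$ with coefficients in $\eta_m$ (via Remark \ref{restriction}) and deduce $\delta\circ\delta=0$ from the fibre-wise identity $\delta_m\circ\delta_m=0$ of \cite{KY-cohomology}. Your additional check that $\delta(f,g)$ lands in $C^{(2p+2,2p+3)}(\xi;\eta)$ is a reasonable piece of diligence that the paper leaves implicit.
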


\begin{defn}
	For the case $p\geq 2,$ let $Z^{(2p, 2p+1)}(\xi; \eta)$ be the subspace of $C^{(2p, 2p+1)}(\xi;\eta)$ spanned by $(f, g)$ such that $\delta (f, g) = 0$ and $B^{(2p, 2p+1)}(\xi; \eta)$ be the subspace $\delta (C^{(2p-2, 2p-1)}(\xi;\eta)).$ Then, the $(2p, 2p+1)$-cohomology group of the Lie-Yamaguti algebra bundle $\xi$ with coefficients in $\eta$ is defined by
	$$H^{(2p, 2p+1)}(\xi; \eta) := \frac{Z^{(2p, 2p+1)}(\xi; \eta)}{B^{(2p, 2p+1)}(\xi; \eta)}.$$
\end{defn}

We next consider the case $p=1,$ and define the cohomology group $H^{(2,3)}(\xi; \eta).$ Define a coboundary operator
$$\delta = (\delta_I, \delta_{II}): C^0(\xi; \eta) \to  C^{(2, 3)} (\xi; \eta),~~ (f, f) \mapsto  (\delta_If, \delta_{II}f),$$ where for $x_1, ~x_2, ~x_3 \in \xi_m,~ m\in M,$
\begin{eqnarray*}
	(\delta_I)_m f_m (x_1,  x_2) &=& \rho_m(x_1)f_m(x_2)- \rho_m(x_2)f_m(x_1) -f_m([x_1, x_2]_m),\\ 
	(\delta_{II})_mf_m(x_1, x_2, x_3) &=& \theta_m(x_2, x_3)f_m(x_1) - \theta_m(x_1, x_3)f_m(x_2)\\
	&~&
	+ D_m(x_1, x_2)f_m(x_3)
	- f_m(\{x_1, x_2, x_3\}_m). 
\end{eqnarray*} 
Furthermore, we define another coboundary operator
$$\delta^* = (\delta^*_I, \delta^*_{II}) :  C^{(2, 3)}(\xi; \eta)  \to  C^{(3, 4)}(\xi; \eta)$$  as follows.Let $m\in M$ and $x_1, ~x_2, ~x_3~x_4 \in \xi_m.$  Then for $(f, g) \in C^{(2, 3)}(\xi; \eta),$ 
\begin{align*}
	&(\delta^*_I)_mf_m(x_1, x_2, x_3)\\
	&= \rho_m(x_1)f_m(x_2, x_3\rho_m(x_2)f_m(x_3, x_1)-\rho_m(x_3)f_m(x_1, x_2)\\
	&\quad 
	+ f_m([x_1,x_2]_m,x_3) +  f_m([x_2,x_3]_m,x_1) + f_m([x_3,x_1]_m,x_2) \\
	&\quad
	+ g_m(x_1, x_2, x_3) + g_m(x_2, x_3, x_3) + g_m(x_3, x_1, x_2),
\end{align*}
\begin{align*}
	& (\delta^*_{II})_mg_m (x_1, x_2, x_3, x_4)\\
	& = \theta_m(x_1, x_4)f_m(x_2, x_3) + \theta_m(x_2, x_4)f_m(x_3, x_1) + \theta_m(x_3, x_4)f_m(x_1, x_2)\\
	&\quad 
	+ g_m([x_1,x_2]_m,x_3, x_4) + g_m([x_2,x_3]_m,x_1, x_4) + g_m([x_3,x_1]_m, x_2, x_4).
\end{align*}
Following  \cite{KY-cohomology}, we have for each $f\in C^1(\xi; \eta)$
$$\delta_I\delta_I f = \delta^*_I\delta_If= 0~~\mbox{and}~~\delta_{II}\delta_{II}f =  \delta^*_{II}\delta_{II}f= 0.$$
In general, for $(f, g) \in C^{(2p, 2p+1)}(\xi; \eta)$ 
$$(\delta \circ \delta)(f, g) = (\delta_I\circ\delta_I(f), \delta_{II}\circ \delta_{II}(g))  =0.$$
We define
$$H^1(\xi;\eta) := \{ f \in C^1( \xi; \eta) | \delta_If = 0,~\delta_{II}f = 0\}.$$

For $p= 1$, we define the cohomology $H^{(2,3)}(\xi; \eta)$  as follows. 

\begin{defn}
	Let $Z^{(2, 3)}(\xi; \eta)$ be the subspace of $C^{(2,3)}(\xi; \eta)$  spanned by $(f, g)$ such that $\delta_If = \delta^*_If= 0,$ and  $\delta_{II}g = \delta^*_{II}g= 0.$ Let 
	$$B^{(2,3)}(\xi; \eta) = \{\delta (f, f)| f\in C^1(\xi; \eta)\}.$$
	Then, the $(2, 3)$-cohomology group of the Lie-Yamaguti algebra bundle $\xi$ with coefficients in $\eta$ is defined by
	$$H^{(2, 3)}(\xi; \eta)  = \frac{Z^{(2, 3)}(\xi; \eta)}{B^{(2,3)}(\xi; \eta)}.$$
\end{defn}

\begin{rem}
	Note that for a Lie-Yamaguti algebra bundle over a point, the above definition of cohomology groups reduces to the cohomology groups of a Lie-Yamaguti algebra as introduced by K. Yamaguti in \cite{KY-cohomology} (cf. \ref{restriction}).
\end{rem}	

\begin{rem}
	Observe that with the adjoint representation $H^1(\xi; \xi) = \mbox{Der}(\xi)$ (cf. Example \ref{derivation-cohomology}).
\end{rem}

Let $\xi = (L, p, M)$ be a Lie-Yamaguti algebra bundle with its $2$-field and the $3$-field of brackets given by
$$m\mapsto [~,~]_m,~~ m \mapsto \{~, ~, ~\}_m,~~ m\in M.$$ Let  $(\eta; \rho, D, \theta)$ be a $\xi$-module. Let $\tau = (f,g) \in Z^{(2,3)}(\xi;\eta)$ be a given cocycle. Then, we have a new Lie-Yamaguti algebra bundle as described below.

\begin{exa}\label{twisted-semi-direct-bundle}
	Consider the vector bundle $\xi \oplus \eta$ and define a $2$-field of brackets and a $3$-field of brackets as follows: For any $m \in M$
	\begin{equation}\label{Twist_LYAB_rel1}
		[x+u,y+v]^{\tau}_m := [x, y]_m + \rho_m(x) v - \rho_m(y)u + f_m(x,y)
	\end{equation}
	\begin{align} \label{Twist_LYAB_rel2}
		\{x+u,y+v,z+w\}^{\tau} &:=  \{ x,y,z\}_m + D_m(x,y) w  
		- \theta_m(y,z) u + g_m(x,y,z)
	\end{align}
	for all $x, y, z \in \xi_m$ and $u,v,w \in \eta_m$. Then, using the fact that $\tau$ is a cocycle it can be checked that equipped with these fields of brackets the  bundle $\xi \oplus \eta$ becomes a  Lie-Yamaguti algebra bundle. We call this new Lie-Yamaguti algebra bundle the twisted semi-direct product of $\xi$ and $\eta$ with respect to $\tau = (f,g)$, and is denoted by $\xi \ltimes_{\tau} \eta$. 
\end{exa}

We conclude with a remark.

\begin{rem}
	It is natural to investigate whether there is some notion of a Lie-Yamaguti algebroid for which Lie-Yamaguti algebra bundle could be viewed as a totally intransitive Lie-Yamaguti algebroid (that is, for which the anchor map is zero). We expect that a proper formulation of this notion would have a far reaching consequence in problems in geometry and physics. We would like to investigate this question in future.
\end{rem}

\bibliographystyle{amsplain}

\end{document}